\newtheorem{theorem}{Theorem}
\newtheorem{lemma}{Lemma}
\newtheorem{proposition}{Proposition}
\newtheorem{corollary}{Corollary}
\theoremstyle{remark}
\newtheorem{remark}{Remark}
\renewcommand{\phi}{\varphi}
\renewcommand{\epsilon}{\varepsilon}
\renewcommand{\Re}{\text{Re}}
\begin{document}
\title{Sums of compositions of pairs of projections}
\author{Andrzej Komisarski}
\address{Andrzej Komisarski\\
Department of Probability Theory and Statistics\\Faculty of Mathematics and Computer Science\\
University of \L\'od\'z\\ul.Banacha 22\\90-238 \L\'od\'z\\Poland}
\email{andkom@math.uni.lodz.pl}
\author{Adam Paszkiewicz}
\address{Adam Paszkiewicz\\
Department of Probability Theory and Statistics\\Faculty of Mathematics and Computer Science\\
University of \L\'od\'z\\ul.Banacha 22\\90-238 \L\'od\'z\\Poland}
\email{adampasz@math.uni.lodz.pl}
\subjclass[2010]{Primary 46L10; Secondary 47C15}
\keywords{Hilbert space, Hermitian operator, orthogonal projection, composition of orthogonal projections, representation}
\thanks{This paper is partially supported by NCN Grant no. N N201 605840.}
\begin{abstract}
We give some necessary and sufficient conditions for the possibility to represent a Hermitian operator on an infinite-dimensional
Hilbert space (real or complex) in the form $\sum_{i=1}^nQ_iP_i$, where $P_1,\dots,P_n$, $Q_1,\dots,Q_n$ are orthogonal projections.
We show that the smallest number $n=n(c)$ admitting the representation $x=\sum_{i=1}^{n(c)}Q_iP_i$ for every $x=x^*$ with $\|x\|\leq c$
satisfies $8c+\frac83\leq n(c)\leq 8c+10$. This is a partial answer to the question asked by L. W. Marcoux in 2010.
\end{abstract}

\maketitle

\section{Introduction}

The research on representing an operator on the Hilbert space as a sum or a linear combination of orthogonal projections
(or idempotents, square-zero operators, commutators of projections and so on) has a long history.
We mention here important papers by Stampfli \cite{Stampfli} (who showed that every operator on infinite dimensional $H$ is a sum of 8 idempotents),
Fillmore \cite{Fillmore} (who showed that every operator on infinite dimensional $H$ is a sum of 64 square-zero operators and a linear combination
of 257 orthogonal projections) and Pearcy and Topping \cite{PT} (who improved these results showing that every operator on infinite dimensional $H$
is a sum of 5 idempotents, a sum of 5 square-zero operators and a linear combination of 16 orthogonal projections).
For a deep survey on this subject see an expository paper by Marcoux \cite{Marcoux}.

Note that the sum of orthogonal projections is always a positive operator.
For this reason if we want to represent any operator (or at least any self-adjoint operator) as a sum of operators
belonging to some class $\mathcal K\subset B(H)$ then we cannot restrict ourselves to the class of orthogonal projections
and we need to consider some other classes. In 2003 Bikchentaev \cite{Bikchentaev} showed that every operator $x$ on the infinite dimensional
Hilbert space $H$ is a sum of compositions of pairs of projections, i.e. $x=\sum_{i=1}^nQ_iP_i$ for some $n$ and orthogonal projections
$P_1,\dots,P_n$, $Q_1,\dots,Q_n$. Note that the assumption $\dim H=\infty$ is necessary because every operator on the finite-dimensional Hilbert
space has finite trace and the equality $x=\sum_{i=1}^nQ_iP_i$ implies $\text{trace}(x)=\sum_{i=1}^n\text{trace}(Q_iP_i)\geq0$.
To obtain his result Bikchentaev uses the representation of an operator as a sum of 5 idempotents (Pearcy--Topping \cite{PT})
but he does not estimate the number of summands in his representation. This problem is explicitly posed by Marcoux \cite{Marcoux}:
for any $c>0$ find possibly small $n(c)$ such that if $\|x\|\leq c$ then $x=\sum_{i=1}^{n(c)}Q_iP_i$ for some orthogonal projections
$P_1,\dots,P_{n(c)}$, $Q_1,\dots,Q_{n(c)}$.
The first attempt to answer this question for self-adjoint operators $x$ was presented in \cite{BikPasz}
where Bikchentaev and Paszkiewicz show that if $\|x\|\leq\frac1{20}$ then the considered representation needs at most 6 summands,
hence $n(c)\leq6\lceil20c\rceil\sim 120c$ (for the self-adjoint operators).
Now we extend the ideas presented in \cite{BikPasz} and we show that for the self-adjoint operators $x$
we have $8c+\frac83\leq n(c)\leq8c+10$ (hence $n(c)\sim 8c$ for large $c$), see Corollary \ref{cornc}.

Moreover, we have the following phenomenon. Let $c(n)$ and $C(n)$ be the largest positive numbers
such that the representation $x=\sum_{i=1}^nQ_iP_i$ is possible for any $x$ satisfying $0\leq x\leq C(n)\cdot\mathbf1$
or $-c(n)\cdot\mathbf1\leq x\leq 0$. Then $C(n)\approx 8c(n)$ for large $n$.
Thus it is natural to characterize the operators $x=x^*$ admitting the representation $x=\sum_{i=1}^nQ_iP_i$ using operator inequalities.
We give some simple and precise, necessary and sufficient conditions of that type
valid for both real and complex Hilbert spaces.
An important tool in our investigation is a description of the matrix representation of all possible compositions of pairs of projections
in $2$-dimensional Hilbert space (Lemma \ref{lemat}). We will also use the spectral theorem for the self-adjoint, bounded operators.

\section{Main results}

Now we present the main results of the paper.

\begin{theorem}\label{thm1}
Let $H$ be a real or complex Hilbert space and let $n$ be positive integer.
If $x=x^*\in B(H)$ satisfies $x=\sum_{i=1}^nQ_iP_i$
for some orthogonal projections $P_1$,\dots ,$P_n$, $Q_1$,\dots, $Q_n$ then 
$$-\frac n8\cdot\mathbf1\leq x\leq n\cdot\mathbf1.$$
\end{theorem}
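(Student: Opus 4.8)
The plan is to prove the two inequalities separately by reducing the question to a single composition $QP$ of two projections, and then summing.

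For the upper bound $x \le n \cdot \mathbf{1}$, the key observation is that each summand $Q_iP_i$ contributes something bounded above by $\mathbf{1}$ in the appropriate sense. Since $x = x^*$, I may write $x = \frac{1}{2}(x + x^*) = \frac{1}{2}\sum_i (Q_iP_i + P_iQ_i)$, so it suffices to control the self-adjoint operator $\frac{1}{2}(QP + PQ)$ for a single pair. For any unit vector $\xi$, expanding $\langle \frac{1}{2}(QP+PQ)\xi, \xi\rangle = \Re\langle P\xi, Q\xi\rangle \le \|P\xi\|\,\|Q\xi\| \le 1$ by Cauchy--Schwarz, since projections have norm at most $1$. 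Summing over $i=1,\dots,n$ gives $\langle x\xi,\xi\rangle \le n$, i.e. $x \le n\cdot\mathbf{1}$.

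The lower bound $-\frac{n}{8}\cdot\mathbf{1} \le x$ is the real content, and I expect this to be the main obstacle. The right factor of $8$ suggests a sharp scalar estimate for a single composition rather than the crude Cauchy--Schwarz bound. Concretely, I would establish that the self-adjoint part $\frac{1}{2}(QP + PQ)$ of a single composition satisfies $\frac{1}{2}(QP+PQ) \ge -\frac{1}{8}\cdot\mathbf{1}$. The cleanest route is to reduce to the two-dimensional situation: by the standard theory of two projections (the analysis underlying Lemma \ref{lemat}), the Hilbert space decomposes into reducing subspaces of dimension at most $2$ on which $P$ and $Q$ act simultaneously. On a two-dimensional common invariant piece where $P$ and $Q$ have angle $\theta$, one computes the eigenvalues of $\frac{1}{2}(QP+PQ)$ explicitly and checks that the most negative one is $-\frac{1}{8}$, attained at a specific angle; the one-dimensional pieces contribute eigenvalues in $\{0,1\}$. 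Thus $\Re\langle P\xi, Q\xi\rangle \ge -\frac{1}{8}\|\xi\|^2$ for every $\xi$, and summing the $n$ terms yields $x \ge -\frac{n}{8}\cdot\mathbf{1}$.

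The crux is therefore the sharp scalar inequality $\Re\langle P\xi, Q\xi\rangle \ge -\frac18\|\xi\|^2$, equivalently that $\frac12(QP+PQ) + \frac18\mathbf 1 \ge 0$. I would verify this by writing, for unit $\xi$, the quantity $\Re\langle P\xi, Q\xi\rangle$ in terms of $a = \|P\xi\|^2$, $b=\|Q\xi\|^2$ and the inner product $\langle P\xi, Q\xi\rangle$, noting that $|\langle P\xi,Q\xi\rangle|^2 \le \langle PQP\xi,\xi\rangle \le \|Q\xi\|^2$ and similar constraints pin down the admissible region; minimizing $\Re\langle P\xi,Q\xi\rangle$ subject to $a,b \le 1$ and the Cauchy--Schwarz-type couplings gives the minimum $-\frac18$. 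The anticipated difficulty is handling the two-dimensional reduction carefully in both the real and complex cases and confirming that no configuration beats $-\frac18$; Lemma \ref{lemat}, which catalogues all possible matrix forms of $QP$ in dimension $2$, is exactly the tool that makes this rigorous and removes any guesswork about extremal angles.
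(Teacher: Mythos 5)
Your overall architecture is the same as the paper's: both bounds are reduced to the sharp scalar estimate $-\tfrac18\|\xi\|^2\le\Re\langle P\xi,Q\xi\rangle\le\|\xi\|^2$ for a single pair of projections, and then summed over $i$. Your Cauchy--Schwarz proof of the upper bound is complete, and your two-dimensional extremal computation (minimum eigenvalue $(\cos^2\theta-\cos\theta)/2$, minimized at $\cos\theta=\tfrac12$, giving $-\tfrac18$) is exactly the numerical heart of the paper's Lemma \ref{lemat} and Corollary \ref{wniosek}. The genuine gap is in how you justify reducing the lower bound to two dimensions. Your primary route asserts that $H$ ``decomposes into reducing subspaces of dimension at most $2$'' for the pair $(P,Q)$. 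That is false in general: by the Halmos two-projections theorem the generic part of the pair is unitarily equivalent to $P\simeq\left(\begin{smallmatrix}\mathbf1&0\\0&0\end{smallmatrix}\right)$, $Q\simeq\left(\begin{smallmatrix}C^2&CS\\CS&S^2\end{smallmatrix}\right)$ with commuting positive contractions $C,S$, $C^2+S^2=\mathbf1$, and when $C$ has continuous spectrum (e.g.\ multiplication by $\sqrt t$ on $L^2[0,1]$) there are \emph{no} finite-dimensional reducing subspaces at all. The bound you want does follow from this operator-matrix form by functional calculus (apply $(c^2-c)/2\ge-\tfrac18$ pointwise on the joint spectrum), but your sketch neither states nor uses that theorem correctly; note also that this theory is not ``the analysis underlying Lemma \ref{lemat}'' --- that lemma is merely an explicit parametrization of rank-one projections in $\mathbb K^2$.

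Your fallback variational sketch also has a concrete defect: the inequality $\langle PQP\xi,\xi\rangle\le\|Q\xi\|^2$ is false (take $\xi=e_1$, $P$ the projection onto $\mathrm{span}(e_1+e_2)$, $Q$ the projection onto $\mathrm{span}(e_2)$: then $\langle PQP\xi,\xi\rangle=\tfrac14$ while $Q\xi=0$), and the constraints you list ($a,b\le1$ plus Cauchy--Schwarz couplings) cannot pin down $-\tfrac18$: under those constraints alone the infimum of $\Re\langle P\xi,Q\xi\rangle$ is $-1$. The constraint that actually produces $-\tfrac18$ is the identity $\langle P\xi,\xi\rangle=\|P\xi\|^2$ (from $P=P^*=P^2$): writing $P\xi=a\xi+u$ and $Q\xi=b\xi+v$ with $u,v\perp\xi$ and $\|\xi\|=1$, one gets $\|u\|^2=a-a^2$, $\|v\|^2=b-b^2$, hence $\Re\langle P\xi,Q\xi\rangle\ge ab-\sqrt{ab(1-a)(1-b)}\ge 2s^2-s\ge-\tfrac18$ with $s=\sqrt{ab}$, using $\sqrt{(1-a)(1-b)}\le 1-s$. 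Had you carried this out, your variational route would be a complete and fully elementary proof, arguably simpler than the paper's. The paper itself avoids both issues by a third device: for a fixed vector $e$ it replaces $P_i,Q_i$ by the rank-one projections $p,q$ onto $\mathrm{span}(P_ie)$ and $\mathrm{span}(Q_ie)$ (so $pe=P_ie$, $qe=Q_ie$ and $(Q_iP_ie,e)=(pe,qe)$), compresses to a two-dimensional subspace containing both ranges, and then applies Corollary \ref{wniosek} to the compressed vector --- no structure theory for pairs of projections is needed.
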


It proves that the constants $-\frac n8$ and $n$ in this theorem cannot be improved.

\begin{proposition}\label{proposa}
The constant $n$ in Theorem \ref{thm1} cannot be decreased.
If $\dim H\geq2$ and $n$ is even then the constant $-\frac n8$ in Theorem \ref{thm1} cannot be increased.
\end{proposition}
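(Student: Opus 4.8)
The plan is to exhibit, for each bound separately, explicit projections achieving equality (or approaching it), thereby showing the constants are optimal. Since Theorem \ref{thm1} gives the two-sided estimate $-\frac n8\cdot\mathbf1\leq x\leq n\cdot\mathbf1$, I need two constructions: one producing an $x=\sum_{i=1}^n Q_iP_i$ with $\|x\|$ arbitrarily close to $n$ from above (so the upper constant $n$ cannot be lowered), and one producing a self-adjoint $x=\sum_{i=1}^n Q_iP_i$ with smallest eigenvalue arbitrarily close to $-\frac n8$ from above (so the lower constant cannot be raised, when $n$ is even).

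For the upper bound the construction is easy: take $P_i=Q_i=\mathbf1$ for every $i$, so that $\sum_{i=1}^n Q_iP_i=n\cdot\mathbf1$, a self-adjoint operator with $x=n\cdot\mathbf1$ attaining the upper endpoint exactly. This shows the constant $n$ cannot be decreased. The substantive content lies in the lower bound, and here I expect to lean heavily on Lemma \ref{lemat}, the description of all products $QP$ of pairs of projections in a two-dimensional space. The idea is to work on $\mathbb C^2$ (or $\mathbb R^2$), where the extremal geometry of a single composition $Q_iP_i$ can be understood completely. A single product of two projections onto lines in the plane has, after passing to the real part / self-adjoint part, a most-negative eigenvalue of $-\frac18$; this is the source of the $8$ in $-\frac n8$. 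So the plan is to choose, for each pair $(P_i,Q_i)$, lines realizing this extremal value of $-\frac18$ for the self-adjoint part $\Re(Q_iP_i)=\frac12(Q_iP_i+(Q_iP_i)^*)$, arranged so that the $n$ contributions add coherently.

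The technical heart, and the main obstacle, is arranging the $n$ summands so their self-adjoint parts reinforce rather than partially cancel: each individual $\Re(Q_iP_i)$ has eigenvalue $-\frac18$ in some direction depending on $i$, and I need all these extremal directions aligned to a common vector so the sum reaches $-\frac n8$ in that direction. This is where the parity hypothesis ($n$ even) should enter: a single product $Q_iP_i$ is generally not self-adjoint, and to cancel the skew-adjoint part $\frac12(Q_iP_i-(Q_iP_i)^*)$ while keeping the self-adjoint part at its extreme, I expect to pair the summands, using $(P_{i},Q_{i})$ and a conjugate/reflected pair $(P_{i+1},Q_{i+1})$ so that the imaginary parts cancel and the real parts add. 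With $n$ even this pairing uses up all $n$ terms exactly, giving an honest self-adjoint $x$ whose bottom eigenvalue equals $-\frac n8$ on the chosen vector; letting the dimension be at least $2$ is exactly what is needed to carry out the planar construction inside $H$. I would finish by verifying, via Lemma \ref{lemat}, that the explicit planar $2\times2$ matrices chosen realize the claimed extreme value, and then embedding this $2$-dimensional example into $H$ (padding with zeros on the orthogonal complement) to conclude the constant $-\frac n8$ cannot be increased.
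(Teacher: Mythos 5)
Your proposal is correct and follows essentially the same route as the paper: the identity-projection example for the upper constant, and for the lower constant a two-dimensional construction built from the extremal point of Lemma \ref{lemat} (the product with diagonal $(-\tfrac18,\tfrac38)$), paired with its reflection by $\left(\begin{smallmatrix}1&0\\0&-1\end{smallmatrix}\right)$ so that the off-diagonal (skew) parts cancel and the sum becomes the self-adjoint operator $\left(\begin{smallmatrix}-n/8&0\\0&3n/8\end{smallmatrix}\right)$, then embedded into any $H$ with $\dim H\geq2$. This is exactly the paper's argument, merely stated as a plan rather than with the explicit matrices written out.
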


If $n$ is odd then $-\frac n8$ can be replaced by some greater constant. However, we have not found its optimal value.

Theorem \ref{thm1} gives some conditions necessary for the representation $x=\sum_{i=1}^nQ_iP_i$.
The following Theorem shows that these conditions are not sufficient.

\begin{theorem}\label{twii}
Let $H$ be a real or complex Hilbert space and let $n$ be positive integer.
Suppose that $x=x^*\in B(H)$ satisfies $x\leq a\cdot\mathbf 1$ for some $a<-\frac{(n-2)^2}{8n}$.
Then $x\neq\sum_{i=1}^nQ_iP_i$ for every orthogonal projections $P_1$,\dots ,$P_n$, $Q_1$,\dots, $Q_n$.
\end{theorem}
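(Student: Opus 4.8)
The plan is to pass to the self-adjoint part, replace each composition by a square, and then extract information by testing against finite-dimensional subspaces. Since $x=x^*$ we have $x=\tfrac12(x+x^*)=\sum_{i=1}^n\Re(Q_iP_i)$. Setting $N_i=2P_i+2Q_i-\mathbf1$ (a self-adjoint operator with $-\mathbf1\le N_i\le3\cdot\mathbf1$), a direct expansion using $P_i^2=P_i$, $Q_i^2=Q_i$ gives the identity $N_i^2=\mathbf1+4(P_iQ_i+Q_iP_i)=\mathbf1+8\,\Re(Q_iP_i)$, hence
\[
x=\frac18\sum_{i=1}^n N_i^2-\frac n8\cdot\mathbf1 .
\]
Thus $x\le a\cdot\mathbf1$ is equivalent to $\sum_i N_i^2\le t\cdot\mathbf1$ with $t=8a+n$, and the hypothesis $a<-\frac{(n-2)^2}{8n}$ is exactly the statement $t<\frac{4(n-1)}n$. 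So it suffices to prove that $\sum_i N_i^2\le t\cdot\mathbf1$ forces $t\ge\frac{4(n-1)}n$, which then yields $a\ge-\frac{(n-2)^2}{8n}$ and contradicts the hypothesis.

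To exploit $\sum_i N_i^2\le t\cdot\mathbf1$ I would test it against an arbitrary finite-dimensional orthogonal projection $E$ of rank $m=\mathrm{tr}\,E$. Taking traces gives $\sum_i\mathrm{tr}(EN_i^2)\le tm$. On the other hand, Cauchy--Schwarz for the Hilbert--Schmidt inner product yields $(\mathrm{tr}(EN_i))^2=(\mathrm{tr}(E\cdot N_iE))^2\le m\,\mathrm{tr}(EN_i^2)$, so
\[
\sum_{i=1}^n\Big(\tfrac{1}{m}\mathrm{tr}(EN_i)\Big)^2\le\frac1m\sum_i\mathrm{tr}(EN_i^2)\le t .
\]
Writing $s_i=s_i(E)=\tfrac1m\mathrm{tr}\big(E(P_i+Q_i)\big)\in[0,2]$ we have $\mathrm{tr}(EN_i)=m(2s_i-1)$, so the displayed line reads $\sum_i(2s_i-1)^2\le t$ for \emph{every} finite-dimensional $E$. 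Therefore the proof reduces to the following claim: for any projections there is a finite-dimensional $E$ with $\sum_{i=1}^n(2s_i(E)-1)^2\ge\frac{4(n-1)}n$.

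The map $\rho\mapsto\sum_i(2\,\mathrm{tr}(\rho(P_i+Q_i))-1)^2$ is a convex function of the density operator $\rho=E/m$, so its supremum over finite-rank states equals its supremum over pure states; hence the claim is equivalent to the rank-one statement
\[
\sup_{\|\xi\|=1}\ \sum_{i=1}^n\langle N_i\xi,\xi\rangle^2\ \ge\ \frac{4(n-1)}n ,
\]
i.e. to producing a single unit vector for which the numbers $2\langle(P_i+Q_i)\xi,\xi\rangle-1$ are, on average, bounded away from $0$. When the $2n$ ranges do not span $H$ this is immediate: any unit $\xi$ orthogonal to all of them gives $\langle N_i\xi,\xi\rangle=-1$ for every $i$, whence $\sum_i\langle N_i\xi,\xi\rangle^2=n\ge\frac{4(n-1)}n$, which is just $(n-2)^2\ge0$.

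I expect the main difficulty to lie precisely in the remaining, generic case, where the ranges span $H$ and there is no common annihilating direction; here one must genuinely use the structure of the $N_i$ rather than only the spectral bounds $-\mathbf1\le N_i\le3\cdot\mathbf1$ (note that $\langle N_i\xi,\xi\rangle\ge1$ whenever $\xi\in\operatorname{ran}P_i$, and symmetrically for $Q_i$, so each $N_i$ carries a forced positive contribution). The plan is to combine the description of pairs of projections in their $2$-dimensional invariant blocks from Lemma \ref{lemat} with the spectral theorem, reducing the search for $\xi$ to an optimization of $\sum_i\langle N_i\xi,\xi\rangle^2$ over mutually compatible blocks, and then to verify by an elementary but careful extremal computation that the balanced configurations---those keeping every $2s_i-1$ near its worst value---cannot push the sum below $\frac{4(n-1)}n$. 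This extremal step, which pins down the constant $\frac{4(n-1)}n$ (equivalently the bound $-\frac{(n-2)^2}{8n}$), is the technical heart of the argument.
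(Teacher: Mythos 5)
Your algebraic reformulation is correct and genuinely elegant: with $N_i=2P_i+2Q_i-\mathbf1$ one indeed has $N_i^2=\mathbf1+4(P_iQ_i+Q_iP_i)$, so $x=\frac18\sum_iN_i^2-\frac n8\cdot\mathbf1$ whenever $x=\sum_iQ_iP_i$ is self-adjoint, and the hypothesis $a<-\frac{(n-2)^2}{8n}$ is exactly $t:=8a+n<\frac{4(n-1)}n$ (as a by-product this identity reproves the lower bound of Theorem \ref{thm1}). The fatal problem comes at the next step. After the trace and Cauchy--Schwarz inequalities you reduce everything to the claim that for \emph{any} projections there is a finite-rank $E$ (equivalently, by your convexity remark, a unit vector $\xi$) with $\sum_i\langle N_i\xi,\xi\rangle^2\ge\frac{4(n-1)}n$. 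This claim is false for every $n\ge2$. Take pairwise anticommuting symmetries $S_1,\dots,S_n$ (for $n=2$ the real Pauli matrices $\sigma_x,\sigma_z$; in general Clifford generators, which exist as real symmetric involutions on $\mathbb K^{2^{n-1}}$ for $\mathbb K=\mathbb R$ or $\mathbb C$), and put $P_i=\frac12(\mathbf1+S_i)$, $Q_i=0$. Then $N_i=S_i$, and for any unit $\xi$, writing $a_i=\langle S_i\xi,\xi\rangle$ and $T=\sum_ia_iS_i$, anticommutativity gives $T^2=\bigl(\sum_ia_i^2\bigr)\mathbf1$, hence
$$\sum_ia_i^2=\langle T\xi,\xi\rangle\le\|T\|=\Bigl(\sum_ia_i^2\Bigr)^{1/2},$$
so $\sum_i\langle N_i\xi,\xi\rangle^2\le1<\frac{4(n-1)}n$ for every unit $\xi$, and the same computation with $\mathrm{tr}(\rho\,\cdot)$, $\rho=E/m$, rules out every finite-rank $E$ as well. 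Note that these projections even satisfy your standing hypotheses, since $x=\sum_iQ_iP_i=0$ is self-adjoint. So the ``elementary but careful extremal computation'' you deferred to the end cannot exist: the statement it would have to establish is simply not true, and this is a failure of the approach, not merely an unfinished step.

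The reason is that the Cauchy--Schwarz step $(\mathrm{tr}(EN_i))^2\le m\,\mathrm{tr}(EN_i^2)$ discards exactly the relevant information. What is true, and what you would need, is $\sup_{\|\xi\|=1}\sum_i\langle N_i^2\xi,\xi\rangle\ge\frac{4(n-1)}n$; but by your own identity that statement \emph{is} the theorem, so nothing is gained by the reduction, while the weaker first-moment quantity $\sum_i\langle N_i\xi,\xi\rangle^2$ cannot see the obstruction (in the counterexample it stays $\le1$ although $\sum_i\langle N_i^2\xi,\xi\rangle\equiv n$). The paper's proof retains second-order information in a different way: it singles out the single worst pair, say $(P_1,Q_1)$, i.e.\ the index minimizing $m_i=\inf_{\|e\|=1}\Re(Q_iP_ie,e)$, notes $m_1\le a/n$, and compares the values of the \emph{one} operator $Q_1P_1$ at \emph{two orthogonal} vectors: a near-minimizer $f_1$ and a vector $f_2$ chosen in the two-dimensional subspace spanned by $f_1$ and $P_1f_1$. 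Lemma \ref{lemat} constrains the pair of diagonal values of a composition of rank-one projections to the set $A=\{(x,y):(x-y)^2\le x+y\le1\}$, and the contradiction comes from the computation $\inf\{y+(n-1)x:(x,y)\in A\}=-\frac{(n-2)^2}{8n}$. In other words, the obstruction is a joint constraint on a single composition $QP$ evaluated in two orthogonal directions, not an averaged constraint on the numerical ranges of all $n$ operators $N_i$ at one state; any repair of your scheme would have to reintroduce information of that two-vector kind.
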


Sufficient conditions are given in the next Theorem.

\begin{theorem}\label{twiii}
Let $H$ be a real or complex infinite dimensional Hilbert space and let $n\geq4$ be even.
If $x=x^*\in B(H)$ is an operator satisfying
$$-\frac{(n-4)^2}{8n}\cdot\mathbf1\leq x\leq(n-2)\cdot\mathbf1$$
then there exist orthogonal projections $P_1$,\dots ,$P_n$, $Q_1$,\dots, $Q_n$
such that $x=\sum_{i=1}^nQ_iP_i$.
\end{theorem}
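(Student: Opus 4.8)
The plan is to exploit that $n=2m$ is even by realizing $x$ as a sum of $m$ \emph{symmetric gadgets}. Since $PQ=Q'P'$ with $Q'=P$, $P'=Q$ is itself a composition of two projections, for each pair $P_j,Q_j$ the self-adjoint operator $Q_jP_j+P_jQ_j$ is a sum of two compositions; writing $S_j=P_j+Q_j$ one has $Q_jP_j+P_jQ_j=S_j^2-S_j$. Thus it suffices to produce projections with $x=\sum_{j=1}^m(S_j^2-S_j)$, where each $S_j$ ranges over the sums of two orthogonal projections (equivalently $0\le S_j\le 2$ with the Halmos two-subspace structure). I would also record the assembling principle used repeatedly: if $H=\bigoplus_\alpha K_\alpha$ and each $x|_{K_\alpha}$ is a sum of $n$ compositions on $K_\alpha$, then $P_i=\bigoplus_\alpha P_i^\alpha$, $Q_i=\bigoplus_\alpha Q_i^\alpha$ realizes $x$ on $H$ with the same $n$. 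Via the spectral theorem this reduces the theorem to realizing the ``elementary'' spectral pieces of $x$.

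Next I would analyze a single gadget through Lemma~\ref{lemat} (the description of a pair of projections). On each two-dimensional fibre where $P_j,Q_j$ are in general position, $S_j$ has eigenvalues $1\pm c$ with $c=|\cos\theta|\in[0,1]$, so $S_j^2-S_j$ has the eigenvalue pair $\{c^2-c,\ c^2+c\}$, carried by two \emph{orthogonal} directions; on the commuting part it is $0$ or $2$. In particular every gadget satisfies $S_j^2-S_j\ge-\tfrac14\mathbf1$, whence $x\ge-\tfrac m4\mathbf1=-\tfrac n8\mathbf1$, consistently with Theorem~\ref{thm1}; and a deep negative eigenvalue $c^2-c$ near $-\tfrac14$ is inseparably tied to a positive eigenvalue $c^2+c$ near $\tfrac34$ on the orthogonal direction.

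The core of the proof, and the step I expect to be hardest, is the genuinely infinite-dimensional construction that turns these anisotropic gadgets into a prescribed (possibly scalar, possibly negative) $x$. The obstruction to beat is a trace identity: $\operatorname{tr}\big(\sum_j(S_j^2-S_j)\big)=2\sum_j\operatorname{tr}(P_jQ_j)\ge0$, which holds on every finite-dimensional invariant block and, more generally, for every shift-invariant (matrix-symbol) family; hence no negative multiple of the identity — and in fact no generic target — can be reached in finite dimensions or shift-invariantly. The construction must therefore let the positive lobes $c^2+c$ of the gadgets escape to infinity while the negative lobes $c^2-c$ reinforce into an isotropic negative part; concretely I would build the families $P_j,Q_j$ from a one-sided shift on $\ell^2$ of the non-negative integers, so that the excess positive trace is absorbed ``at infinity'' by a defect/index mechanism rather than cancelled locally. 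Optimizing this arrangement is exactly what pins down the constant: the deepest isotropic value attainable with $m$ gadgets should be $-\tfrac{(m-2)^2}{4m}=-\tfrac{(n-4)^2}{8n}$, the ``$-2$'' in $m-2$ reflecting the gadgets spent to launch and balance the escape, which also explains why the upper end is only $(n-2)\mathbf1$ rather than $n\mathbf1$.

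Finally I would cover the whole operator interval and a general $x$. Using the spectral theorem I split $H$ into the spectral subspaces of $x$ on which $x$ is positive, zero, and negative; on each piece I apply the construction above (interpolating between the extremal negative arrangement and the easy positive one to hit an arbitrary value of $x$ in $[-\tfrac{(n-4)^2}{8n},\,n-2]$), and I recombine by the orthogonal-sum principle, which keeps the number of compositions equal to $n$. The one delicate point to dispatch here is a finite-dimensional negative spectral block, for which the trace identity forbids a self-contained realization; I would handle it by coupling that block to an infinite reservoir drawn from the rest of $H$ (available since $\dim H=\infty$), again routing the surplus positive trace to infinity. This yields $x=\sum_{i=1}^nQ_iP_i$ and completes the proof.
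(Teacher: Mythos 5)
Your opening reduction (write $n=2m$ and build $x$ from $m$ gadgets, each gadget being a sum of two compositions) coincides with the paper's first step, and your fibrewise analysis of $S_j^2-S_j$ is correct as far as it goes: the eigenvalue pairs $(c^2-c,\,c^2+c)$, the bound $S_j^2-S_j\geq-\tfrac14\cdot\mathbf1$, and the trace obstruction $\mathrm{tr}(Q_jP_j+P_jQ_j)=2\,\mathrm{tr}(P_jQ_jP_j)\geq0$ are all right. But the proof stops exactly where it would have to begin. The ``genuinely infinite-dimensional construction'' is never given: ``build the families $P_j,Q_j$ from a one-sided shift \dots\ absorbed at infinity by a defect/index mechanism,'' ``optimizing this arrangement \dots\ should be $-\tfrac{(m-2)^2}{4m}$,'' and ``interpolating between the extremal negative arrangement and the easy positive one'' are statements of intent, not arguments. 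No operators are defined, the extremal constant $-\tfrac{(n-4)^2}{8n}$ is asserted rather than derived, and nothing shows that a general (non-scalar) $x$ in the stated operator interval is reached with exactly $n$ compositions. Since this construction \emph{is} the content of Theorem \ref{twiii}, the proposal has a genuine gap.

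There is also a structural reason why your particular gadgets resist completion. The spectral data of $S_j^2-S_j$ lie on the one-parameter curve $x+y=\tfrac{(x-y)^2}{2}$, with eigenvectors dictated by $P_j,Q_j$; so a symmetric gadget cannot hold a prescribed constant value on one subspace while its value on a paired subspace runs over an interval. The paper's gadget is different and strictly more flexible: it is $QP+Q'P'$ with $P'=JPJ$, $Q'=JQJ$, where $J=\left(\begin{smallmatrix}\mathbf1&0\\0&-\mathbf1\end{smallmatrix}\right)$ relative to a pairing of two subspaces of equal dimension; conjugation by $J$ cancels the off-diagonal blocks, and Proposition \ref{propos} (using the Borel selection of Lemma \ref{lemat} and operator calculus) shows this realizes $\mathrm{diag}\bigl(2u(\lambda),2v(\lambda)\bigr)$ for \emph{any} measurable function taking values in the two-parameter region $A$. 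Moreover, the paper never routes trace to infinity by shifts: it splits $H$ into $m$ commuting pieces $E_i$ with $\dim E_i=\dim H$ (Remark \ref{remark}; the only place where $\dim H=\infty$ enters), and defines $y_i$ by the round-robin formula \eqref{defy}, in which the positive plateau $2b\cdot\widetilde G_{i2}$ that gadget $i$ needs in order to license its negative part is repaid by the terms $-\tfrac{2b}{m-1}\cdot\widetilde G_{i2}$ contributed by the other $m-1$ gadgets; the constants $a=-\tfrac{(m-2)(m+2)}{8m^2}$ and $b=\tfrac{(m-2)(3m-2)}{8m^2}$ are chosen precisely so that every resulting spectral pair lies in $A$. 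In particular your heuristic that the ``$-2$'' in $m-2$ counts ``gadgets spent to launch and balance the escape'' has no counterpart in the actual mechanism: the loss comes from the factor $\tfrac1{m-1}$ in distributing $x$ among the gadgets and from the borrowing cost $\tfrac{2b}{m-1}$.
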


As a consequence of Theorems \ref{twii} and \ref{twiii}
we obtain the following estimates for the constants $n(c)$ in the Morcoux's problem.

\begin{corollary}\label{cornc}
For every $c>0$ let $n(c)$ be the smallest number such that for every $x=x^*\in B(H)$, $\dim H=\infty$,
satisfying $\|x\|\leq c$ the representation $x=\sum_{i=1}^nQ_iP_i$ is possible.
Then we have
$$2+4c+4\sqrt{c^2+c}\leq n(c)\leq2\left\lceil 2+2c+2\sqrt{c^2+2c}\right\rceil.$$
In particular $8c+\frac83\leq n(c)\leq8c+10$, hence $\frac{n(c)}c\to8$ for $c\to\infty$.
\end{corollary}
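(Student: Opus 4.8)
The plan is to derive Corollary \ref{cornc} by combining the necessary condition of Theorem \ref{twii} with the sufficient condition of Theorem \ref{twiii}, translating the operator inequalities into the norm bound $\|x\|\leq c$ and then solving the resulting algebraic (in)equalities for $n$. For a self-adjoint $x$ the condition $\|x\|\leq c$ is equivalent to $-c\cdot\mathbf1\leq x\leq c\cdot\mathbf1$, so the key is to understand, for each fixed $n$, the largest $c$ for which \emph{every} such $x$ admits the representation. Since the obstruction in Theorem \ref{twii} comes from the negative part of the spectrum, the binding constraint is the lower bound.

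For the upper bound on $n(c)$ I would use Theorem \ref{twiii}. That theorem (for even $n\geq4$) guarantees the representation whenever $-\frac{(n-4)^2}{8n}\cdot\mathbf1\leq x\leq(n-2)\cdot\mathbf1$. To cover all $x$ with $\|x\|\leq c$ it suffices that $c\leq\frac{(n-4)^2}{8n}$ and $c\leq n-2$; for the relevant large range of $n$ the first of these is the stronger requirement. So I would solve the inequality $\frac{(n-4)^2}{8n}\geq c$ for $n$: writing $m=n-4$ this is a quadratic, and solving $m^2-8cm-16c\geq0$ gives $m\geq 4c+4\sqrt{c^2+c}$, i.e.\ $n\geq 4+4c+4\sqrt{c^2+c}$. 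Because Theorem \ref{twiii} requires $n$ even, I would take the smallest even integer $n$ at least this large, which accounts for the ceiling and the factor $2$ in the stated bound $n(c)\leq 2\lceil 2+2c+2\sqrt{c^2+2c}\rceil$; here one must be slightly careful, since $2\lceil 2+2c+2\sqrt{c^2+2c}\rceil$ is expressed through $\sqrt{c^2+2c}$ rather than $\sqrt{c^2+c}$, so I expect the precise even rounding to come from rewriting $4+4c+4\sqrt{c^2+c}$ in the halved form $2(2+2c+2\sqrt{c^2+c})$ and verifying the identity under the ceiling.

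For the lower bound I would use Theorem \ref{twii}, which says that if $x\leq a\cdot\mathbf1$ with $a<-\frac{(n-2)^2}{8n}$ then the representation with $n$ summands is impossible. Choosing $x=-c\cdot\mathbf1$ (which satisfies $\|x\|=c$), this operator fails to be representable by $n$ projection-compositions as soon as $-c<-\frac{(n-2)^2}{8n}$, i.e.\ $c>\frac{(n-2)^2}{8n}$. Hence if $n<n(c)$ then necessarily $c\leq\frac{(n-2)^2}{8n}$, which after solving the quadratic $ (n-2)^2\geq 8cn$ yields $n-2\geq 4c+4\sqrt{c^2+c}$, giving $n(c)\geq 2+4c+4\sqrt{c^2+c}$. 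This is exactly the stated lower bound.

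Finally I would extract the asymptotics. For the explicit constant bounds $8c+\frac83\leq n(c)\leq 8c+10$ I would expand $4\sqrt{c^2+c}=4c\sqrt{1+1/c}$ and $2\sqrt{c^2+2c}=2c\sqrt{1+2/c}$ to control both expressions, checking that the lower expression $2+4c+4\sqrt{c^2+c}$ exceeds $8c+\frac83$ and that the rounded upper expression stays below $8c+10$ for all $c>0$; both reduce to elementary inequalities for the square roots together with an estimate on how much the ceiling can add. The claim $n(c)/c\to8$ is then immediate since both bounds are $8c+O(1)$. The main obstacle I anticipate is purely bookkeeping rather than conceptual: reconciling the parity (even $n$) constraint of Theorem \ref{twiii} with the form of the ceiling in the stated bound, and verifying that the two different radicals $\sqrt{c^2+c}$ and $\sqrt{c^2+2c}$ are the correct ones for the lower and upper bounds respectively, since a sign or constant slip in solving the quadratics would propagate into the final constants $\frac83$ and $10$.
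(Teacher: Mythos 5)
Your strategy coincides with the paper's: apply Theorem \ref{twii} to $x=-c\cdot\mathbf1$ to get $c\leq\frac{(n(c)-2)^2}{8n(c)}$ for the lower bound, and apply Theorem \ref{twiii} with the smallest admissible even $n$ for the upper bound. The lower-bound derivation is essentially correct (modulo a slip of direction: you wrote ``if $n<n(c)$ then $c\leq\frac{(n-2)^2}{8n}$,'' whereas the inequality holds for any $n$ for which the representation works, in particular $n=n(c)$). However, the upper bound contains a genuine algebra error. With $m=n-4$, the inequality $(n-4)^2\geq 8cn$ reads $m^2\geq 8c(m+4)$, i.e.\ $m^2-8cm-32c\geq 0$, not $m^2-8cm-16c\geq 0$; its positive root is $4c+4\sqrt{c^2+2c}$, so the correct threshold is $n\geq 4+4c+4\sqrt{c^2+2c}=2\bigl(2+2c+2\sqrt{c^2+2c}\bigr)$, and the radical $\sqrt{c^2+2c}$ appears directly --- there is nothing to reconcile. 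The reconciliation you anticipate instead (that your threshold $4+4c+4\sqrt{c^2+c}$ agrees with the stated bound ``under the ceiling'') is false: for $c=1$ one has $\bigl\lceil 2+2c+2\sqrt{c^2+c}\bigr\rceil=\lceil 4+2\sqrt2\rceil=7$ while $\bigl\lceil 2+2c+2\sqrt{c^2+2c}\bigr\rceil=\lceil 4+2\sqrt3\rceil=8$, and the even integer $n=14$ that your version would select does not satisfy the hypothesis of Theorem \ref{twiii}, since $\frac{(14-4)^2}{8\cdot 14}=\frac{25}{28}<1$. So, as written, your argument would yield an upper bound that is actually invalid.

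There is a second step that fails, in deriving $8c+\frac83\leq n(c)$. You propose to check that the raw expression $2+4c+4\sqrt{c^2+c}$ exceeds $8c+\frac83$; but that inequality is equivalent to $\sqrt{c^2+c}\geq c+\frac16$, i.e.\ to $c\geq\frac1{24}$, and it fails for $0<c<\frac1{24}$. The paper avoids this by exploiting integrality of $n(c)$: since $n(c)\geq\bigl\lceil 2+4c+4\sqrt{c^2+c}\bigr\rceil$ and the argument of the ceiling exceeds $2$, one has $n(c)\geq3\geq 8c+\frac83$ when $c\leq\frac1{24}$, while for $c\geq\frac1{24}$ the raw inequality does hold; you invoke ceilings only on the upper-bound side, where in fact the cruder estimates $\lceil z\rceil<z+1$ and $\sqrt{c^2+2c}\leq c+1$ suffice to get $8c+10$. (A point both you and the paper pass over silently: $(n-2)^2\geq 8cn$ also has the spurious branch $n\leq 2+4c-4\sqrt{c^2+c}<2$, i.e.\ $n=1$ when $c\leq\frac18$; it is excluded because a one-term representation could be padded to a two-term one, and Theorem \ref{twii} with $n=2$ already rules out $-c\cdot\mathbf1$ for every $c>0$.)
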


\section{Proofs}

\begin{lemma}\label{lemat}
Let $\mathbb K=\mathbb R$ or $\mathbb C$, let $e_1=\left(\begin{smallmatrix}1\\0\end{smallmatrix}\right)$ and $e_2=\left(\begin{smallmatrix}0\\1\end{smallmatrix}\right)\in\mathbb K^2$ and let $A\subset\mathbb R^2$
be a set of all pairs $(\Re(QPe_1,e_1),\Re(QPe_2,e_2))$, where $P$ and $Q$ are one-dimensional projections
in $\mathbb K^2$. Then $A=\{(x,y)\in\mathbb R^2:(x-y)^2\leq x+y\leq 1\}$.
Moreover, there exist Borel functions $P^\cdot$ and $Q^\cdot:A\to B(\mathbb K^2)$
such that for every $(x,y)\in A$ the operators $P^{x,y}$ and $Q^{x,y}$ are one-dimensional projections,
$(Q^{x,y}P^{x,y}e_1,e_1)=x$ and $(Q^{x,y}P^{x,y}e_2,e_2)=y$.
\end{lemma}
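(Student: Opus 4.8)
The plan is to parametrize one-dimensional projections by the unit vectors spanning their ranges. Write $P=uu^*$ and $Q=vv^*$ for unit vectors $u=(u_1,u_2)$, $v=(v_1,v_2)\in\mathbb K^2$, so that $QPw=(w,u)(u,v)v$ for all $w$. Putting $a=u_1\overline{v_1}$ and $b=u_2\overline{v_2}$, a direct computation gives $(QPe_1,e_1)=\overline a(a+b)$ and $(QPe_2,e_2)=\overline b(a+b)$, hence
$$x=\Re(QPe_1,e_1)=|a|^2+\Re(\overline a b),\qquad y=\Re(QPe_2,e_2)=|b|^2+\Re(\overline a b).$$
Introducing the three real quantities $s=|a|$, $t=|b|$ and $r=\Re(\overline a b)$, this reads $x=s^2+r$, $y=t^2+r$. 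By Cauchy--Schwarz applied to $(|u_1|,|u_2|)$ and $(|v_1|,|v_2|)$ one gets $s+t=|u_1||v_1|+|u_2||v_2|\leq\|u\|\,\|v\|=1$, and conversely every pair with $s,t\geq0$, $s+t\leq1$ comes from suitable unit vectors. For fixed $s,t$, adjusting the phases (signs, when $\mathbb K=\mathbb R$) of the coordinates makes $r$ range over the full interval $[-st,st]$ in the complex case and over the two endpoints $\{-st,st\}$ in the real case.

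For the inclusion $A\subseteq\{(x,y):(x-y)^2\leq x+y\leq1\}$ I would argue directly from these formulas: since $x+y=s^2+t^2+2r\in[(s-t)^2,(s+t)^2]\subseteq[0,1]$ we obtain $x+y\leq1$, while $x-y=s^2-t^2$ gives $(x-y)^2=(s-t)^2(s+t)^2\leq(s-t)^2\leq x+y$, the last inequality being exactly the lower endpoint of the interval for $x+y$.

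For the reverse inclusion I would handle the existence of the representation and the Borel selection simultaneously, solving for \emph{real} unit vectors (these also serve when $\mathbb K=\mathbb C$ and make $\overline a b$ real, so that $(Q^{x,y}P^{x,y}e_j,e_j)$ equals $x$, resp.\ $y$, exactly and not merely up to its real part). Given $(x,y)$ with $(x-y)^2\leq x+y\leq1$ one has $x+y\geq0$, and the region splits into the pieces $\{x,y\geq0\}$, $\{xy<0\}$ and the single point $x=y=0$. When $x,y\geq0$ I take $r=+st$, so $x+y=(s+t)^2$ with $s+t=\sqrt{x+y}\leq1$ and $s-t=(x-y)/\sqrt{x+y}$; when $xy<0$ I take $r=-st$, so $x+y=(s-t)^2$ with $|s-t|=\sqrt{x+y}$ and $s+t=|x-y|/\sqrt{x+y}$, the bound $s+t\leq1$ being precisely $(x-y)^2\leq x+y$. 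In each piece $s,t\geq0$ and $s+t\leq1$ hold exactly because of the defining inequalities, and explicit coordinates $u_i,v_i$ are then recovered from $s,t$ by square roots and inverse trigonometric functions (with signs fixing $r=\pm st$); the degenerate point $x=y=0$ is covered by any orthogonal pair $u\perp v$, for which $QP=0$. These formulas are continuous on each of finitely many pieces, so $(x,y)\mapsto P^{x,y}=uu^*$ and $(x,y)\mapsto Q^{x,y}=vv^*$ are Borel.

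The main obstacle is the reverse inclusion in the real case, where $r$ is pinned to the two extreme values $\pm st$. The sign analysis must be organized so that the geometric constraints $s,t\geq0$ and $s+t\leq1$ collapse \emph{exactly} onto $(x-y)^2\leq x+y\leq1$, with no slack; making this close on the nose across the case split, while patching the pieces into a single globally defined Borel selection (including the degenerate boundary $x+y=0$) and keeping the diagonal entries exactly real, is where the care is required.
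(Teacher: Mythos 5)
Your proposal is correct, and its skeleton matches the paper's proof: both parametrize the projections by unit vectors, prove the inclusion $A\subseteq\{(x,y):(x-y)^2\leq x+y\leq1\}$ by the same Cauchy--Schwarz computation, and obtain the reverse inclusion by constructing projections with \emph{real} entries (so that the diagonal entries of $QP$ are exactly $x$ and $y$, settling the complex case too), with Borel measurability coming from piecewise continuity. The difference is in how the construction is executed. The paper writes down a single closed-form pair of real symmetric matrices $P^{x,y},Q^{x,y}$ in the variables $s=x+y$, $d=x-y$, valid on all of $A\setminus\{(0,0)\}$, and verifies they are projections by checking symmetry, $\mathrm{trace}=1$ and $\det=0$; only the origin needs a separate pair, so the Borel selection has just two pieces. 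You instead \emph{solve} for the data: reduce to moduli $s=|a|$, $t=|b|$ and the cross term $r=\pm st$, split $A$ into $\{x,y\geq0\}$, $\{xy<0\}$ and $\{(0,0)\}$, check in each piece that the constraints $s,t\geq0$, $s+t\leq1$ are exactly equivalent to the defining inequalities of $A$ (your case computations are right: $s=x/\sqrt{x+y}$, $t=y/\sqrt{x+y}$ with $r=+st$ in the first piece; $s=|x|/\sqrt{x+y}$, $t=|y|/\sqrt{x+y}$ with $r=-st$ in the second, where $s+t\leq1$ is precisely $(x-y)^2\leq x+y$), and then recover unit vectors by a trigonometric parametrization (e.g.\ $u=(\cos\alpha,\sin\alpha)$, $v=(\cos\beta,\pm\sin\beta)$ with $\alpha\pm\beta$ given by arccosines of $s\mp t$, which indeed yields $|u_1||v_1|=s$, $|u_2||v_2|=t$). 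Your route has more case analysis but explains where the solution comes from; the paper's route buys a uniform formula and a shorter verification at the cost of presenting the matrices unmotivated. The "obstacle" you flag at the end is in fact already discharged by your own case analysis, so there is no gap.
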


\begin{proof}
Let $(x,y)\in A$, hence $x=\Re(QPe_1,e_1)$ and $y=\Re(QPe_2,e_2)$ for some one-dimensional projections
$P=\left(\begin{smallmatrix}p_1\\p_2\end{smallmatrix}\right)(\overline{p_1},\overline{p_2})$
and
$Q=\left(\begin{smallmatrix}q_1\\q_2\end{smallmatrix}\right)(\overline{q_1},\overline{q_2})$
with $p_1$, $p_2$, $q_1$, $q_2\in\mathbb K$ satisfying $\|(p_1,p_2)\|=\|(q_1,q_2)\|=1$.
Then
$$x=\Re\left((1,0)\left(\begin{smallmatrix}q_1\\q_2\end{smallmatrix}\right)(\overline{q_1},\overline{q_2})
\left(\begin{smallmatrix}p_1\\p_2\end{smallmatrix}\right)(\overline{p_1},\overline{p_2})\left(\begin{smallmatrix}1\\0\end{smallmatrix}\right)\right)
=\Re(q_1\overline{p_1}(\overline{q_1}p_1+\overline{q_2}p_2)),$$
$$y=\Re\left((0,1)\left(\begin{smallmatrix}q_1\\q_2\end{smallmatrix}\right)(\overline{q_1},\overline{q_2})
\left(\begin{smallmatrix}p_1\\p_2\end{smallmatrix}\right)(\overline{p_1},\overline{p_2})\left(\begin{smallmatrix}0\\1\end{smallmatrix}\right)\right)
=\Re(q_2\overline{p_2}(\overline{q_1}p_1+\overline{q_2}p_2)).$$
It follows that
$x+y=|q_1\overline{p_1}+q_2\overline{p_2}|^2\leq\|(q_1,q_2)\|^2\|(p_1,p_2)\|^2=1$ and
$$(x-y)^2\leq|(\overline{q_1}p_1+\overline{q_2}p_2)(q_1\overline{p_1}-q_2\overline{p_2})|^2\leq|\overline{q_1}p_1+\overline{q_2}p_2|^2\|(q_1,q_2)\|^2\|(p_1,-p_2)\|^2=x+y.$$

Now, let $(x,y)\in\mathbb R^2$ be such that $(x-y)^2\leq x+y\leq 1$.
If $(x,y)=(0,0)$ then we consider one-dimensional projections $P^{0,0}=\left(\begin{smallmatrix}1&0\\0&0\end{smallmatrix}\right)$
and $Q^{0,0}=\left(\begin{smallmatrix}0&0\\0&1\end{smallmatrix}\right)$ and we have
$(Q^{0,0}P^{0,0}e_1,e_1)=(Q^{0,0}P^{0,0}e_2,e_2)=0$.
Hence $(0,0)\in A$.
If $(x,y)\neq(0,0)$ then for $s:=x+y$ and $d:=x-y$ we have $s>0$, $s-d^2\geq0$ and $\tfrac1s-1\geq0$
and we can define
$$P^{x,y}=\begin{pmatrix}
\frac{1+d+\sqrt{(s-d^2)(\tfrac1s-1)}}2
&\frac{\sqrt{s-d^2}-d\sqrt{\tfrac1s-1}}2\\
\frac{\sqrt{s-d^2}-d\sqrt{\tfrac1s-1}}2
&\frac{1-d-\sqrt{(s-d^2)(\tfrac1s-1)}}2
\end{pmatrix},$$
$$Q^{x,y}=\begin{pmatrix}
\frac{1+d-\sqrt{(s-d^2)(\tfrac1s-1)}}2
&\frac{\sqrt{s-d^2}+d\sqrt{\tfrac1s-1}}2\\
\frac{\sqrt{s-d^2}+d\sqrt{\tfrac1s-1}}2
&\frac{1-d+\sqrt{(s-d^2)(\tfrac1s-1)}}2
\end{pmatrix}.$$
It is easy to check that $P^{x,y}=(P^{x,y})^*$, $Q^{x,y}=(Q^{x,y})^*$,
$\det(P^{x,y})=\det(Q^{x,y})=0$ and $\text{trace}(P^{x,y})=\text{trace}(Q^{x,y})=1$,
hence $P^{x,y}$ and $Q^{x,y}$ are one-dimensional projections.
Moreover $(Q^{x,y}P^{x,y}e_1,e_1)=x$ and $(Q^{x,y}P^{x,y}e_2,e_2)=y$,
hence $(x,y)\in A$.

The maps $A\ni(x,y)\mapsto P^{x,y}$ and $A\ni(x,y)\mapsto Q^{x,y}$
are continuous everywhere besides $(0,0)$, hence they are Borel maps, as required.
\end{proof}

\begin{corollary}\label{wniosek}
Let $\mathbb K=\mathbb R$ or $\mathbb C$. If $e\in\mathbb K^2$ satisfies $\|e\|=1$ and if $P$, $Q$ are one-dimensional projections
in $\mathbb K^2$ then $-\frac18\leq\Re(QPe,e)\leq 1$.
\end{corollary}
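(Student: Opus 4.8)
The plan is to reduce the statement to a question about the set $A$ from Lemma \ref{lemat} and then to compute the range of the first coordinate over $A$. The point is that Lemma \ref{lemat} describes $\Re(QPe_1,e_1)$ only for the distinguished basis vector $e_1$, so first I would transport an arbitrary unit vector $e$ to $e_1$ by a unitary and check that the quantity $\Re(QPe,e)$ retains its form.

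For the reduction, given a unit vector $e\in\mathbb K^2$, extend $\{e\}$ to an orthonormal basis of $\mathbb K^2$ and let $U$ be the unitary (orthogonal in the real case) operator with $Ue_1=e$. Writing $\tilde P=U^*PU$ and $\tilde Q=U^*QU$, both $\tilde P$ and $\tilde Q$ are again one-dimensional projections and
$$\Re(QPe,e)=\Re(QPUe_1,Ue_1)=\Re(U^*QU\,U^*PU\,e_1,e_1)=\Re(\tilde Q\tilde Pe_1,e_1).$$
By Lemma \ref{lemat} the number $x:=\Re(\tilde Q\tilde Pe_1,e_1)$ is the first coordinate of a point $(x,y)\in A$, i.e.\ $(x-y)^2\le x+y\le1$. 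Hence it suffices to show that every $(x,y)\in A$ satisfies $-\tfrac18\le x\le1$.

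For this computation put $s=x+y$ and $d=x-y$, so that the membership condition reads $d^2\le s\le1$ while $2x=s+d$. Since $d^2\le s$ gives $|d|\le\sqrt s$ and since $0\le s\le1$, the upper bound follows from $2x=s+d\le s+\sqrt s\le1+1=2$, whence $x\le1$. For the lower bound, $2x=s+d\ge s-\sqrt s=(\sqrt s)^2-\sqrt s\ge-\tfrac14$, using that $t^2-t\ge-\tfrac14$ for all real $t$ (minimum at $t=\tfrac12$, attained here since $s=\tfrac14\in[0,1]$); thus $x\ge-\tfrac18$. This yields $-\tfrac18\le x\le1$, as required.

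I expect the only genuine subtlety to be the unitary reduction in the first step: one must notice that Lemma \ref{lemat} is stated for the fixed vector $e_1$ and verify that conjugating $P$ and $Q$ by $U$ preserves both the class of one-dimensional projections and the value $\Re(QPe,e)$. Once that is in place, the remaining work is the elementary one-variable estimate $t^2-t\ge-\tfrac14$, which pins down the extreme value $-\tfrac18$.
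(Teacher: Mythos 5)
Your proposal is correct and follows essentially the same route as the paper: a change of orthonormal basis reduces to the case $e=e_1$, after which the bound is read off from the first-coordinate range of the set $A$ in Lemma \ref{lemat}. The only difference is that you carry out explicitly the elementary optimization (via $s\pm\sqrt{s}$) that the paper leaves as an unstated computation, which if anything makes the argument more complete.
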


\begin{proof}
Without loss of generality (we can choose an appropriate coordinate system)
it is enough to consider the case $e=e_1=\left(\begin{smallmatrix}1\\0\end{smallmatrix}\right)$.
Then the set of possible values of $\Re(QPe,e)$ is $\{x:(x,y)\in A$ for some $y\in\mathbb R\}=(-\frac18,1)$.
\end{proof}

\begin{proof}[Proof of Proposition \ref{proposa}]
For any (real or complex) Hilbert space $H$ and $P_1=\dots=P_n=Q_1=\dots=Q_n=\mathbf 1$
we have $x=\sum_{i=1}^nQ_iP_i=n\cdot\mathbf 1$, hence the constant $n$ cannot be decreased.

Let $H=\mathbb R^2$ or $H=\mathbb C^2$ and let $n$ be even.
We put $$Q_1=Q_3=\dots=Q_{n-1}=Q^{-1/8,3/8},$$
$$P_1=P_3=\dots=P_{n-1}=P^{-1/8,3/8},$$
$$Q_2=Q_4=\dots=Q_n=\left(\begin{smallmatrix}1&0\\0&-1\end{smallmatrix}\right)Q^{-1/8,3/8}\left(\begin{smallmatrix}1&0\\0&-1\end{smallmatrix}\right),$$
$$P_2=P_4=\dots=P_n=\left(\begin{smallmatrix}1&0\\0&-1\end{smallmatrix}\right)P^{-1/8,3/8}\left(\begin{smallmatrix}1&0\\0&-1\end{smallmatrix}\right).$$
Since
$Q^{-1/8,3/8}P^{-1/8,3/8}=\left(\begin{smallmatrix}-\frac18&b\\c&\frac38\end{smallmatrix}\right)$ for some $b,c\in\mathbb R$,
we get that $x=\sum_{i=1}^nQ_iP_i=\left(\begin{smallmatrix}-\frac n8&0\\0&\frac {3n}8\end{smallmatrix}\right)$ is self-adjoint and
the constant $-\frac n8$ in Theorem \ref{thm1} cannot be increased.
For any $H$ with $\dim H\geq2$ the result easily follows from the two-dimensional case.
\end{proof}

\begin{proposition}\label{propos}
Let $K$ be a real or complex Hilbert space, $z_1,z_2\in B(K)$ be two self-adjoint commuting operators and let $z_1=\int x(\lambda)E(d\lambda)$ and $z_2=\int y(\lambda)E(d\lambda)$
be their spectral representations with a common spectral measure $E$.
Assume that for every $\lambda\in\mathbb R$ we have $(x(\lambda),y(\lambda))\in A$, where $A$ is the set defined in Lemma \ref{lemat}.
Then $z=z_1\oplus z_2\in B(K\oplus K)$ satisfies $2z=QP+Q'P'$ for some projections $P$, $Q$, $P'$ and $Q'$ in $K\oplus K$.
\end{proposition}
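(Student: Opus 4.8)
The plan is to realize $K\oplus K$ as the algebra of $2\times2$ block matrices over $K$ and to build $P$, $Q$ by ``integrating'' the fibrewise projections of Lemma \ref{lemat} against the common spectral measure $E$. Since $(x(\lambda),y(\lambda))\in A$ for every $\lambda$, the Borel maps from that lemma furnish real symmetric $2\times2$ projections
$$P^{x(\lambda),y(\lambda)}=\begin{pmatrix}a(\lambda)&b(\lambda)\\b(\lambda)&d(\lambda)\end{pmatrix},\qquad Q^{x(\lambda),y(\lambda)}=\begin{pmatrix}a'(\lambda)&b'(\lambda)\\b'(\lambda)&d'(\lambda)\end{pmatrix},$$
whose entries are bounded Borel functions of $\lambda$ (each lies in $[-1,1]$, being an entry of a projection). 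I would then set
$$P=\begin{pmatrix}\int a\,dE&\int b\,dE\\\int b\,dE&\int d\,dE\end{pmatrix},\qquad Q=\begin{pmatrix}\int a'\,dE&\int b'\,dE\\\int b'\,dE&\int d'\,dE\end{pmatrix}\in B(K\oplus K).$$

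The first thing to verify is that $P$ and $Q$ are orthogonal projections. Self-adjointness is immediate from the real, symmetric entry pattern. For idempotency I would exploit the fact that all four blocks are functions of the single spectral measure $E$, hence commute, and that the spectral integral is multiplicative, $\int f\,dE\cdot\int g\,dE=\int fg\,dE$. Consequently each block-entry of $P^2$ is the spectral integral of the corresponding entry of the fibrewise square $(P^{x(\lambda),y(\lambda)})^2$; since the latter equals $P^{x(\lambda),y(\lambda)}$ pointwise in $\lambda$, I conclude $P^2=P$, and likewise $Q^2=Q$. This passage from the pointwise projection identities to the operator identities, resting entirely on the commutativity of the blocks and multiplicativity of the spectral integral, is the crux of the argument.

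Next I would compute $QP$ blockwise. By the same multiplicativity, its diagonal blocks are
$$\int\big(Q^{x(\lambda),y(\lambda)}P^{x(\lambda),y(\lambda)}\big)_{11}\,dE=\int x(\lambda)\,E(d\lambda)=z_1,\qquad \int\big(Q^{x(\lambda),y(\lambda)}P^{x(\lambda),y(\lambda)}\big)_{22}\,dE=\int y(\lambda)\,E(d\lambda)=z_2,$$
where I use the defining properties $(Q^{x,y}P^{x,y}e_1,e_1)=x$ and $(Q^{x,y}P^{x,y}e_2,e_2)=y$ from Lemma \ref{lemat}. Hence $QP=\left(\begin{smallmatrix}z_1&W\\V&z_2\end{smallmatrix}\right)$ for some off-diagonal blocks $W,V$ that I need not identify.

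Finally, to cancel the off-diagonal blocks and double the diagonal, I would conjugate by the self-adjoint unitary $U=\left(\begin{smallmatrix}\mathbf1&0\\0&-\mathbf1\end{smallmatrix}\right)$ on $K\oplus K$, setting $P'=UPU$ and $Q'=UQU$. These remain orthogonal projections (unitary conjugation preserves both self-adjointness and idempotency, as $U=U^*=U^{-1}$), and $Q'P'=U(QP)U=\left(\begin{smallmatrix}z_1&-W\\-V&z_2\end{smallmatrix}\right)$. Adding the two products gives $QP+Q'P'=\left(\begin{smallmatrix}2z_1&0\\0&2z_2\end{smallmatrix}\right)=2z$, which is the required representation.
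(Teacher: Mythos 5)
Your proposal is correct and is essentially the paper's own proof: the same block-matrix operators $P$, $Q$ obtained by integrating the fibrewise projections of Lemma \ref{lemat} against $E$, and the same conjugation by $\left(\begin{smallmatrix}\mathbf1&0\\0&-\mathbf1\end{smallmatrix}\right)$ to produce $P'$, $Q'$ and kill the off-diagonal blocks. The only difference is presentational: what the paper compresses into ``using Lemma \ref{lemat} and the von Neumann operator calculus,'' you spell out via multiplicativity of the spectral integral and commutativity of the blocks, which is exactly the intended justification.
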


\begin{proof}
Using Lemma \ref{lemat}, for every $\lambda\in\mathbb R$ we obtain
$P^{x(\lambda),y(\lambda)}=\left(\begin{smallmatrix}p_{11}(\lambda)&p_{12}(\lambda)\\p_{21}(\lambda)&p_{22}(\lambda)\end{smallmatrix}\right)$
and
$Q^{x(\lambda),y(\lambda)}=\left(\begin{smallmatrix}q_{11}(\lambda)&q_{12}(\lambda)\\q_{21}(\lambda)&q_{22}(\lambda)\end{smallmatrix}\right)$,
where $p_{ij},q_{ij}:\mathbb R\to\mathbb R$ are Borel functions.
We define
$$P=\begin{pmatrix}
\int p_{11}(\lambda)E(d\lambda)&\int p_{12}(\lambda)E(d\lambda)\\
\int p_{21}(\lambda)E(d\lambda)&\int p_{22}(\lambda)E(d\lambda)
\end{pmatrix},
\qquad Q=\begin{pmatrix}
\int q_{11}(\lambda)E(d\lambda)&\int q_{12}(\lambda)E(d\lambda)\\
\int q_{21}(\lambda)E(d\lambda)&\int q_{22}(\lambda)E(d\lambda)
\end{pmatrix},$$
$$P'=\left(\begin{smallmatrix}\mathbf1&0\\0&-\mathbf1\end{smallmatrix}\right)P\left(\begin{smallmatrix}\mathbf1&0\\0&-\mathbf1\end{smallmatrix}\right)
\quad\text{and}\quad Q'=\left(\begin{smallmatrix}\mathbf1&0\\0&-\mathbf1\end{smallmatrix}\right)Q\left(\begin{smallmatrix}\mathbf1&0\\0&-\mathbf1\end{smallmatrix}\right).$$
Using Lemma \ref{lemat} and the von Neumann operator calculus we easily obtain that $P$, $Q$, $P'$ and $Q'$ are projections in $K\oplus K$ and
$$QP+Q'P'=\begin{pmatrix}
2\int x(\lambda)E(d\lambda)&0\\
0&2\int y(\lambda)E(d\lambda)
\end{pmatrix}=2z.$$
\end{proof}

\begin{proof}[Proof of Theorem \ref{thm1}]
Let $H$ be a Hilbert space over the field $\mathbb K$ with $\mathbb K=\mathbb C$ or $\mathbb R$.

If $\dim H=1$ then the only projections in $H$ are $\mathbf0$ and $\mathbf1$.
It follows that if $x=\sum_{i=1}^nQ_iP_i$ then $x=m\cdot\mathbf1$ for some $m=0,\dots,n$.
In the sequel we assume that $\dim H\geq 2$.

Now, we fix $e\in H$ and $i\in\{1,\dots,n\}$.
Let $p,q$ be one-dimensional projections such that $pe=P_ie$ and $qe=Q_ie$.
Moreover, let $r$ be two-dimensional projection satisfying $p\leq r$ and $q\leq r$
and let $U:\mathbb K^2\to H$ be an isometry satisfying $UU^*=r$.
Then
$$(Q_iP_ie,e)=(P_ie,Q_ie)=(pe,qe)=(rpre,qre)=(UU^*pUU^*e,qUU^*e)=(Pe',Qe')=(QPe',e'),$$
where $P=U^*pU$ and $Q=U^*qU$ are one-dimensional projections in $\mathbb K^2$
and $e'=U^*e\in\mathbb K^2$.
If $e'\neq0$, then
$$(Q_iP_ie,e)=(QPe',e')=\left(QP\frac{e'}{\|e'\|},\frac{e'}{\|e'\|}\right)\cdot\|e'\|^2.$$
Since $\|e'\|\leq\|e\|$ and (by Corollary \ref{wniosek}) $-\frac18\leq\Re\left(QP\frac{e'}{\|e'\|},\frac{e'}{\|e'\|}\right)\leq 1$
we obtain $-\frac18\cdot\|e\|^2\leq\Re(Q_iP_ie,e)\leq\|e\|^2$.
If $e'=0$, then $(Q_iP_ie,e)=(QPe',e')=0$ and the last inequality is also satisfied.

Summing the obtained inequalities with $i=1,\dots,n$
and using $\sum_{i=1}^n\Re(Q_iP_ie,e)=\Re(xe,e)=(xe,e)$ we get
$-\frac n8\cdot\|e\|^2\leq(xe,e)\leq n\cdot\|e\|^2$, which implies that the self-adjoint operator $x$ satisfies
$-\frac n8\cdot\mathbf1\leq x\leq n\cdot\mathbf1$.
\end{proof}

\begin{proof}[Proof of Theorem \ref{twii}]
Aiming at a contradiction we assume that $a<-\frac{(n-2)^2}{8n}\cdot\mathbf1$, $x=x^*\leq a\cdot\mathbf 1$
and $x=\sum_{i=1}^nQ_iP_i$ for some orthogonal projections $P_1$,\dots ,$P_n$, $Q_1$,\dots, $Q_n$.

For $i=1,\dots,n$ let $m_i=\inf\{\Re(Q_iP_ie,e):\|e\|=1\}$. Without loss of generality we may assume that $m_1=\min\{m_1,\dots,m_n\}$.
Clearly $nm_1\leq\sum_{i=1}^nm_i\leq\inf\{(xe,e):\|e\|=1\}\leq a$, hence $m_1\leq\frac an<-\frac{(n-2)^2}{8n^2}$.
We put $M=\sup\{\Re(Q_1P_1e,e):\|e\|=1\}$. We fix positive $\epsilon<\frac{(n-2)^2}{8n^2}$ and we choose $e\in H$ satisfying $\|e\|=1$
and $\Re(Q_1P_1e,e)>M-\epsilon$. We have
$$a\geq(xe,e)=\Re(Q_1P_1e,e)+\sum_{i=2}^n\Re(Q_iP_ie,e)>M-\epsilon+(n-1)m_1.$$
Next, we choose $f_1\in H$ satisfying $\|f_1\|=1$ and $\Re(Q_1P_1f_1,f_1)<m_1+\epsilon$.
Then for every $f_2\in H$ with $\|f_2\|=1$ one has
\begin{equation}\label{eqn}
a>M-\epsilon+(n-1)m_1\geq\Re(Q_1P_1f_2,f_2)+(n-1)\Re(Q_1P_1f_1,f_1)-n\epsilon.
\end{equation}

By $\Re(Q_1P_1f_1,f_1)<m_1+\epsilon<0$ we have that $P_1f_1\neq 0$ and $P_1f_1\neq f_1$,
hence $f_1$ and $P_1f_1$ are linearly independent. Let $r$ be the projection onto $\text{span }(f_1,P_1f_1)$
and let $p\leq r$ and $q$ be one-dimensional projections such that $pf_1=P_1f_1$ and $qp=Q_1p$.
The subspace $rH$ is isometric to $\mathbb R^2$ (or $\mathbb C^2$) and we are going to use Lemma \ref{lemat}.
We choose $f_2\in rH$ satisfying $f_2\perp f_1$ and $\|f_2\|=1$.
Since $pf_1=P_1f_1$ and $p(P_1f_1)=P_1(P_1f_1)$ it follows that $pf=P_1f$
for every $f\in rH$. In particular $pf_2=P_1f_2$, hence $qpf_2=Q_1P_1f_2$.

Note that $rqr$ is one-dimensional self-adjoint operator, hence $rqr=\alpha q'$
for some $0\leq\alpha\leq1$ and  one-dimensional projection $q'\leq r$. By \eqref{eqn} we have
\begin{equation*}
\begin{split}
a+n\epsilon&>\Re(Q_1P_1f_2,f_2)+(n-1)\Re(Q_1P_1f_1,f_1)=\Re(qpf_2,f_2)+(n-1)\Re(qpf_1,f_1)\\
&=\Re(qrpf_2,rf_2)+(n-1)\Re(qrpf_1,rf_1)=\Re(rqrpf_2,f_2)+(n-1)\Re(rqrpf_1,f_1)\\
&=\alpha\left[\Re(q'pf_2,f_2)+(n-1)\Re(q'pf_1,f_1)\right].
\end{split}
\end{equation*}
We have $a+n\epsilon<0$, hence $\Re(q'pf_2,f_2)+(n-1)\Re(q'pf_1,f_1)<0$.
Thus
\begin{equation}\label{eqnn}
a+n\epsilon>\alpha\left[\Re(q'pf_2,f_2)+(n-1)\Re(q'pf_1,f_1)\right]\geq \Re(q'pf_2,f_2)+(n-1)\Re(q'pf_1,f_1).
\end{equation}
On the other hand, by Lemma \ref{lemat} and an elementary computation concerning the set $A$ defined in that lemma we have
$$\Re(q'pf_2,f_2)+(n-1)\Re(q'pf_1,f_1)\geq\inf\{y+(n-1)x:(x,y)\in A\}=-\frac{(n-2)^2}{8n},$$
which contradicts \eqref{eqnn} for small enough $\epsilon$.
\end{proof}

\begin{remark}\label{remark}
Let $K$ be a Hilbert space, let $x=x^*\in B(K)$. Assume that cardinal numbers $d_1$, $d_2$ satisfy
$d_1+d_2=\dim K$. Then there exists a projection $E$ on $K$ such that $\dim E=d_1$,
$\dim (\mathbf 1-E)=d_2$ and $x$ commute with $E$.
\end{remark}

\begin{proof}[Proof of Theorem \ref{twiii}]
Let $n=2m\geq4$ be fixed. We will define self-adjoint operators $y_1,\dots,y_m$ satisfying $x=\sum_{i=1}^my_i$ and such that
$y_i=Q_iP_i+Q_i'P_i'$ for some projections $P_i$, $Q_i$, $P_i'$ and $Q_i'$ (then the proof will be finished).

We will use the following observation.
For $y=y*\in B(H)$ the existence of projections $P$, $Q$, $P'$ and $Q'$ satisfying $y=QP+Q'P'$
is a consequence of the following condition:
There exist projections $\widehat G_1$, $\widehat G_2$, $\widetilde G_1$ and $\widetilde G_2\in B(H)$ satisfying:
\begin{enumerate}
\item $\widehat G_1+\widehat G_2+\widetilde G_1+\widetilde G_2=\mathbf1$, $\dim\widehat G_1=\dim\widehat G_2$ and $\dim\widetilde G_1=\dim\widetilde G_2$,
\item $\widehat G_1$, $\widehat G_2$, $\widetilde G_1$ and $\widetilde G_2$ commute with $y$,
\item $y\widehat G_2=0$ and $0\leq y\widehat G_1\leq 2\cdot\widehat G_1$,
\item $y\widetilde G_2=2b\cdot\widetilde G_2$ and $2a\cdot\widetilde G_1\leq y\widetilde G_1\leq 2(1-b)\cdot\widetilde G_1$,
\end{enumerate}
where $a=-\frac{(m-2)(m+2)}{8m^2}$ and $b=\frac{(m-2)(3m-2)}{8m^2}$.

Indeed, by (i) we have $\dim\widehat G_1=\dim\widehat G_2$ and we may identify $\widehat K:\approx\widehat G_1H\approx \widehat G_2H$
and then we may treat the operators $z_1=\frac{y\widehat G_2}2=0$ and $z_2=\frac{y\widehat G_1}2$ as the self-adjoint operators in $B(\widehat K)$ (here we also use (ii)).
Clearly $z_1$ and $z_2$ commute, hence they have the spectral representations $z_1=\int x(\lambda)E(d\lambda)$ and $z_2=\int y(\lambda)E(d\lambda)$
with a common spectral measure $E$.
Clearly $x(\lambda)=0$ and (by (iii)) $0\leq y(\lambda)\leq 1$ for every $\lambda$.
It follows that for every $\lambda\in\mathbb R$ we have $(x(\lambda),y(\lambda))\in A$.
By Proposition \ref{propos} we obtain $y(\widehat G_1+\widehat G_2)=2(z_1\oplus z_2)=\widehat Q\widehat P+\widehat Q'\widehat P'$
for some projections $\widehat P$, $\widehat Q$, $\widehat P'$, $\widehat Q'\leq \widehat G_1+\widehat G_2$.

Similarly, using (i), (ii) and (iv), we obtain $y(\widetilde G_1+\widetilde G_2)=\widetilde Q\widetilde P+\widetilde Q'\widetilde P'$
for some projections $\widetilde P$, $\widetilde Q$, $\widetilde P'$, $\widetilde Q'\leq\widetilde G_1+\widetilde G_2$.
Indeed, after identification $\widetilde K:\approx\widetilde G_1H\approx\widetilde G_2H$ we have $\frac{y(\widetilde G_1+\widetilde G_2)}2=\int x(\lambda)E(d\lambda)\oplus\int y(\lambda)E(d\lambda)$
with $x(\lambda)=b$ and $a\leq y(\lambda)\leq 1-b$ (by (iv)).
It follows that $(x(\lambda),y(\lambda))\in A$ for every $\lambda\in\mathbb R$ (the special choice of the constants $a$ and $b$ plays a role here)
and by Proposition \ref{propos} we obtain $y(\widetilde G_1+\widetilde G_2)=\widetilde Q\widetilde P+\widetilde Q'\widetilde P'$.

Finally (by (i)) we have
$$y=y(\widehat G_1+\widehat G_2)+y(\widetilde G_1+\widetilde G_2)=QP+Q'P'$$
for the projections $P=\widetilde P+\widehat P$, $Q=\widetilde Q+\widehat Q$,
$P'=\widetilde P'+\widehat P'$ and $Q'=\widetilde Q'+\widehat Q'$.

It remains to define self-adjoint operators $y_1,\dots,y_m$ satisfying $x=\sum_{i=1}^my_i$
and (i)-(iv) for appropriate $\widehat G_1$, $\widehat G_2$, $\widetilde G_1$ and $\widetilde G_2$ (depending on $i$).
We start by picking projections $E_1,\dots,E_m$ in $H$ such that $\sum_{i=1}^mE_i=\mathbf 1$, $\dim E_i=\dim H$ and $E_i$ commutes with $x$ for every $i$.
(Here we use Remark \ref{remark} $m-1$ times.)
Next, we define $F=\text{supp }(x-2b\cdot\mathbf1)^+$ and $F^\perp=\mathbf1-F^+$ (here $y^+=(y+|y|)/2$ for $y=y^*$). Clearly $F$ and $F^\perp$
commute with $x$ and with projections $E_i$.

Next, for each $i$ we define $\widehat G_{i1}=(\mathbf1-E_i)F$ and $\widetilde G_{i1}=(\mathbf1-E_i)F^\perp$.
Then we apply Remark \ref{remark} for $K=E_iH$, $d_1=\dim\widehat G_{i1}$ and $d_2=\dim\widetilde G_{i1}$
(clearly $d_1+d_2=\dim(\mathbf1-E_i)=\dim E_i$). We obtain projections $\widehat G_{i2}$ and $\widetilde G_{i2}=E_i-\widehat G_{i2}$
commuting with $x$ and satisfying $\dim\widehat G_{i2}=d_1=\dim\widehat G_{i1}$ and $\dim\widetilde G_{i2}=d_2=\dim\widetilde G_{i1}$.
Clearly condition (i) is satisfied.

We have that $2m$ projections $\widehat G_{i1}=(\mathbf1-E_i)F$, $\widetilde G_{i1}=(\mathbf1-E_i)F^\perp$ (with $i=1,\dots,m$)
mutually commute, because $F, E_1,\dots,E_m$ commute.
$2m$ projections $\widehat G_{i2}$, $\widetilde G_{i2}$ are mutually orthogonal, hence they commute.
Finally, each of the projections $\widehat G_{i2}$, $\widetilde G_{i2}$ commute with $E_1,\dots,E_m$ and $x$ (hence $F$)
thus they commute with each of $2m$ projections $\widehat G_{i1}$, $\widetilde G_{i1}$.
It follows that each pair of $4m$ projections $\widehat G_{i1}$, $\widetilde G_{i1}$, $\widehat G_{i2}$ and $\widetilde G_{i2}$
(with $i=1,\dots,m$) commute.

We define $y_i$'s as follows:
\begin{equation}\label{defy}
y_i=2b\cdot\widetilde G_{i2}+\frac1{m-1}\cdot(\mathbf1-E_i)x-\frac{2b}{m-1}\cdot\sum_{j\neq i}\widetilde G_{j2}
\end{equation}
It is easy to verify that $x=\sum_{i=1}^my_i$ and $y_i$ commutes with $\widehat G_{i1}$, $\widetilde G_{i1}$, $\widehat G_{i2}$ and $\widetilde G_{i2}$
(hence(ii) is satisfied).

By \eqref{defy} we have
\begin{equation}\label{yiG}
y_i=0\cdot\widehat G_{i2}+\frac{x-2b\cdot D}{m-1}\cdot\widehat G_{i1}+2b\cdot\widetilde G_{i2}+\frac{x-2b\cdot D}{m-1}\cdot\widetilde G_{i1},
\end{equation}
where $D:=\sum_{j\neq i}\widetilde G_{j2}\leq\sum_{j\neq i}E_j=\widehat G_{i1}+\widetilde G_{i1}$ is a projection
and it commutes with $\widehat G_{i1}$ and $\widetilde G_{i1}$.

We will verify conditions (iii) and (iv).
By \eqref{yiG}, $x\leq (n-2)\cdot\mathbf 1=2(m-1)\cdot\mathbf 1$, $b>0$ and $D\widehat G_{i1}\geq0$ ($D\widehat G_{i1}$ is a projection) we obtain 
$$y_i\widehat G_{i1}=\frac{x-2b\cdot D}{m-1}\cdot\widehat G_{i1}=\frac{x\widehat G_{i1}}{m-1}-\frac{2b\cdot D\widehat G_{i1}}{m-1}\leq2\cdot\widehat G_{i1}.$$
Since $\widehat G_{i1}$ is a subprojection of $F$ (which is the support of $(x-2b\cdot\mathbf1)^+$) we obtain that $(x-2b\cdot\mathbf1)\widehat G_{i1}\geq0$ thus
$$y_i\widehat G_{i1}=\frac{x-2b\cdot D}{m-1}\cdot\widehat G_{i1}=\frac{x-2b\cdot\mathbf1}{m-1}\cdot\widehat G_{i1}+\frac{2b}{m-1}\cdot(\mathbf1-D)\widehat G_{i1}\geq0.$$
By \eqref{yiG} we also have $y_i\widehat G_{i2}=0$, hence (iii) is satisfied.

Since $\widetilde G_{i1}$ is a subprojection of $F^\perp$, hence $(x-2b\cdot\mathbf1)\widetilde G_{i1}\leq0$. Consequently (by \eqref{yiG})
$$y_i\widetilde G_{i1}=\frac{x-2b\cdot D}{m-1}\cdot\widetilde G_{i1}=\frac{2b}{m-1}\cdot\widetilde G_{i1}+\frac{x-2b\cdot\mathbf1}{m-1}\cdot\widetilde G_{i1}-\frac{2b}{m-1}\cdot D\widetilde G_{i1}\leq\frac{2b}{m-1}\cdot\widetilde G_{i1}
\leq 2(1-b)\cdot\widetilde G_{i1}.$$
Here we used the inequality $\frac{b}{m-1}\leq 1-b$, which is valid for $b=\frac{(m-2)(3m-2)}{8m^2}$.
By $x\geq-\frac{(n-4)^2}{8n}\cdot\mathbf1=-\frac{(m-2)^2}{4m}\cdot\mathbf1$ we obtain
\begin{equation*}
\begin{split}
y_i\widetilde G_{i1}&=\frac{x-2b\cdot D}{m-1}\cdot\widetilde G_{i1}=\frac x {m-1}\cdot\widetilde G_{i1}-\frac{2b}{m-1}\cdot\widetilde G_{i1}+\frac{2b}{m-1}\cdot(\mathbf1-D)\widetilde G_{i1}\\
&\geq-\frac{(m-2)^2}{4m(m-1)}\cdot\widetilde G_{i1}-\frac{2b}{m-1}\cdot\widetilde G_{i1}=a\cdot\widetilde G_{i1}.
\end{split}
\end{equation*}
By \eqref{yiG} we have $y_i\widetilde G_{i2}=2b\cdot\widetilde G_{i2}$, hence (iv) is satisfied.
\end{proof}

\begin{proof}[Proof of Corollary \ref{cornc}]
By Theorem \ref{twii} we have $c\leq\frac{(n(c)-2)^2}{8n(c)}$.
Solving this inequality on $n(c)$ we obtain $2+4c+4\sqrt{c^2+c}\leq n(c)$.

Now, let $n=2\left\lceil 2+2c+2\sqrt{c^2+2c}\right\rceil$.
Then $n\geq4$ is even and it satisfies $c\leq\frac{(n-4)^2}{8n}$.
Hence, by Theorem \ref{twiii}, we know
that every $x=x^*$ satisfying $\|x\|\leq c$ admits the representation $x=\sum_{i=1}^nQ_iP_i$.
Thus $n(c)\leq2\left\lceil 2+2c+2\sqrt{c^2+2c}\right\rceil$.

The second part of the corollary follows by the inequalities
$$\left\lceil2+4c+4\sqrt{c^2+c}\right\rceil\geq 8c+\frac83
\quad\text{and}\quad
2\left\lceil 2+2c+2\sqrt{c^2+2c}\right\rceil\leq8c+10\quad\text{for }c>0.$$
\end{proof}

\section{Final remarks}

We do not know any estimates for the number $n(c)$ for not necessarily Hermitian operators.
It seems that finding such estimates might be easier for complex Hilbert spaces.
This belief is based on the possibility to represent any operator as $x+iy$
with self-adjoint $x$ and $y$, which is possible only in the complex case.

Bikchentaev generalized his result about representation $x=\sum_{i=1}^nQ_iP_i$ in $B(H)$ to
wide classes of $C^*$-algabras, in particular he considered properly infinite von Neumann algebras
(\cite{Bikii}, \cite{Bikiii}). We believe that all the results proved in our paper can also be generalized from $B(H)$
to any properly infinite von Neumann algebra.

\end{document}